\def\today{\number\day\space\ifcase\month\or   January\or February\or
   March\or April\or May\or June\or   July\or August\or September\or
   October\or November\or December\fi\   \number\year}
\theoremstyle{definition}
\newtheorem{thm}{Theorem}[section]
\newtheorem{lem}[thm]{Lemma}
\newtheorem{prp}[thm]{Proposition}
\newtheorem{dfn}[thm]{Definition}
\newtheorem{rmk}[thm]{Remark}
\newcommand{\beq}{\begin{equation}}
\newcommand{\eeq}{\end{equation}}
\newcommand{\beqr}{\begin{eqnarray*}}
\newcommand{\eeqr}{\end{eqnarray*}}
\newcommand{\bal}{\begin{align*}}
\newcommand{\eal}{\end{align*}}
\newcommand{\bei}{\begin{itemize}}
\newcommand{\eei}{\end{itemize}}
\newcommand{\Z}{{\mathbb{Z}}}
\newcommand{\N}{{\mathbb{N}}}
\newcommand{\Aut}{{\mathrm{Aut}}}
\newcommand{\Index}{{\mathrm{Index}}}
\title[The Rokhlin property for inclusions of C*-algebras]
{The Rokhlin property for inclusions of C*-algebras}
\author{Hiroyuki Osaka}
\date{\today}
\thanks{$^*$Research of the first author partially supported by the JSPS grant for Scientific Research No.23540256 
and No.17K05285}
\address{ Department of Mathematical Sciences\\
  Ritsumeikan University\\ Kusatsu, Shiga, 525-8577  Japan}
\email[]{osaka@se.ritsumei.ac.jp}
\author{Tamotsu Teruya}
\address{Faculty of Education, Gunma University, 4-2 Aramaki-machi,
Maebashi City, Gunma, 371-8510, Japan}
\email[]{teruya@gunma-u.ac.jp}
\keywords{Jiang-Su absorption, Inclusion of C*-algebras, strictly comparison}
\subjclass[2000]{Primary 46L55; Secandary 46L35.}
\begin{document}

\begin{abstract}
Let $P \subset A$ be an inclusion of $\sigma$-unital 
C*-algebras with a finite index in the sense of Izumi. 
Then we introduce the Rokhlin property for a conditional expectation 
$E$ from $A$ onto $P$
and show that 
if $A$ is simple and satisfies any of the property $(1) \sim (12)$
 listed in the below, 
 and $E$ has the Rokhlin property, then so does $P$.

\vskip 2mm

\begin{enumerate}
\item[(1)]
Simplicity;
\item[(2)]
Nuclearity;
\item[(3)]
C*-algebras that absorb a given strongly self-absorbing C*-algebra $\mathcal{D}$;
\item[(4)]
C*-algebras of stable rank one;
\item[(5)]
C*-algebras of real rank zero;
\item[(6)]
C*-algebras of nuclear dimension at most $n$, where $n \in \Z^+$;
\item[(7)]
C*-algebras of decomposition rank at most $n$, where $n \in \Z^+$;
\item[(8)]
Separable simple C*-algebras that are stably isomorphic to AF algebras;
\item[(9)]
Separable simple C*-algebras that are stably isomorphic to AI algebras;
\item[(10)]
Separable simple C*-algebras that are stably isomorphic to AT algebras;
\item[(11)]
Separable simple C*-algebras that are stably isomorphic to 
sequential direct limits of one dimensional NCCW complexes;
\item[(12)] 
Separable C*-algebras with strict comparison of positive elements.
\end{enumerate}

In particular, when $\alpha : G \rightarrow \rm{Aut}(A)$ 
is an action of a finite group $G$ on $A$ with the Rokhlin property  in the sense of Nawata, the properties $(1) \sim (12)$ are inherited to the fixed point algebra $A^\alpha$ and the crossed product algebra $A \rtimes_\alpha G$ from $A$.
\end{abstract}

\maketitle


\section{Introduction}

The Rokhlin property for finite group actions on $\sigma$-unital C*-algebras was introduced by Nawata \cite{NN} and was formulated by  using Kirchberg's central sequence C*-algebras $F(A) = A' \cap A^\infty/A_{nn}(A, A^\infty)$ as follows:
an action $\alpha$ of a finite group $G$ on a $\sigma$-unital C*-algebra 
$A$ is said to have the Rokhlin property if there exists a partition of unity $\{e_g\}_{g\in G} \subset F(A)$ consisting of projections satisfying $\alpha_g(e_h) = e_{gh}$  
for any $g, h \in G$. If $A$ is unital, then the definition above coincides with the definition of \cite{Izumi:Rokhlin}.

After that Santiago \cite{LS} showed that the Nawata's definition is equivalent to the generalized Rokhlin property in the case of a separable C*-algebra, that is, an action $\alpha$ of a finite group $G$ on a separable C*-algebra is said to have the generalized Rokhlin property if for any $\varepsilon > 0$ and any finite set $F \subset A$ there exist mutually orthogonal 
positive contractions $(r_g)_{g\in G} \subset A$ such that 
\begin{enumerate}
\item
$\|\alpha_g(r_g) - r_{gh}\| < \varepsilon$ for all $g, h \in G$;
\item
$\|r_ga - ar_g\| < \varepsilon$ for all $a \in F$ and $g \in G$;
\item
$\|(\sum_gr_g)a - a\| < \varepsilon $ for all $a \in F$.
\end{enumerate}
and showed that the permanence properties of C*-algebra $A$, like pure infiniteness, stable rank one, real rank zero, the nuclear dimension $n$, $\mathcal{D}$-absorption for a strongly self-absorbing C*-algebra $\mathcal{D}$, simplicity, $AF$, $AI$, $AT$-properties, the strict comparison property for Cuntz semigroup etc.,   are preserved under taking crossed products by 
actions of finite groups with the generalized Rokhlin property.

As continuous works in \cite{OT0} and \cite{OT} the authors introduce the Rokholin property for an inclusion of 
$\sigma$-unital C*-algebras $P \subset A$ with finite index in the sense of Izumi \cite{Izumi:inclusion} and show in Theorem~3.2 that several permanence properties for $A$ are preserved for $P$ using the sequentially split *homomorphisms technique by Barlak and Szab\'o \cite{BS}.
In particular, when $\alpha:G \rightarrow \rm{Aut}(A)$ is an action of a finite group $G$ on $A$, we show that $\alpha$ has the Rokhlin property in the sense of Nawata \cite{NN} if and only if the canonical conditional expectation $E:A \rightarrow A^\alpha$ has the Rokhlin property (Proposition~\ref{prp:Nawata}), hence several permanence properties for $A$ are inherited to the fixed point algebra $A^\alpha$ and the crossed product algebra $A \rtimes_\alpha G$. 


\section{Rokhlin property}

At first we  recall Index for $\sigma$-unital C*-algebras in the sense of Izumi \cite{Izumi:inclusion}.

Let $P \subset A$ be an inclusion of $\sigma$-unital C*-algebras and $E$ be a faithful conditional expectation from $A$ onto $P$.
Then $E$ is of index finite in the sense of Izumi if 
$$
\sup\{\lambda > 0\colon \dfrac{1}{\lambda}E - \mathrm{Id} \ \hbox{is positive }\}.
$$
is finite. 
We define 
$\mathrm{Index}_p(E)  = \left(\sup\{\lambda > 0\colon \dfrac{1}{\lambda}E - \mathrm{Id} \ \hbox{is positive }\}\right)^{-1}$.
When there is no such number $\lambda$ satisfying the condition, we set $\mathrm{Ind}_p E = \infty$.

Note that when $A$ and $P$ are unital, $\mathrm{Index}_pE < \infty$ implies that there is a quasi-basis 
$\{(u_i, u_i^*)\}_{i=1}^n$ of $A \times A$ for $E$ such that for any $x \in A$ 
\begin{align*}
x = \sum_{i=1}^nu_iE(u_i^*x) = \sum_{i=1}^nE(xu_i)u_i^*,\\
\mathrm{Index}_p E \leq \sum_{i=1}^nu_iu_i^*  (= \mathrm{Index}_w E).  
\end{align*}

\vskip 3mm


\begin{rmk}\label{dual conditional expectation}
Under the condition in Definition~\ref{dfn:Rokhlin} for an inclusion of $\sigma$-unital C*-algebras $P \subset A$, 
if $A$ is simple, $\widehat{E^{**}}(1)$ is scalar, that is, $\widehat{E^{**}}(1) \in Z(M(A))$ by \cite[Theorem~3.2]{Izumi:inclusion}.  
Then there is  the dual conditional expectation $E_1\colon A_1 \rightarrow A$ satisfies 
$E_1(a_1e_Pa_2) = \widehat{E^{**}}(1)^{-1}a_1a_2$ 
for any $a_1, a_2\in A$ by \cite[Theorem~2.8(1)]{Izumi:inclusion}, where $A_1 = C^*(\{ae_Pb\mid a, b \in A\})$. 
Moreover, since $\Index_pE_1 < \infty$, we know that there is a basic construction 
$A \subset A_1 \subset A_2$ by \cite[Corollary~3.4]{Izumi:inclusion}. 
We write $\mathrm{Index}_w E = \widehat{E^{**}}(1)$.

(see \cite[Definition~2.3 and Theorem~2.8]{Izumi:inclusion}.)  
\end{rmk}


\vskip 3mm

For a C*-algebra $A$, we set
\begin{align*}
C_0(A) &= \{(a_n) \in \ell^\infty(\N, A)\colon \lim_{n\rightarrow\infty}\|a_n\| = 0\}\\
A^\infty &= \ell^\infty(\N, A)/C_0(A).
\end{align*}

We identify $A$ with the C*-subalgebra of $A^\infty$ consisting of the equivalence classes
of constant sequences.

\vskip 3mm

\begin{dfn}\label{dfn:Rokhlin}
Let $P \subset A$ be an inclusion of $\sigma$-unital C*-algebras and $E\colon A \rightarrow P$ be 
a conditional expectation of index finite in the sense of Izumi. 
A conditional expectation $E$ is said to have the {\it Rokhlin property} 
if there exists a contractive positive element  $e \in A^{'} \cap A^\infty$ satisfying 
\begin{itemize}
\item[(i)]
$[e] \in N(\overline{AA^\infty A}, A^\infty)/A_{nn}(\overline{AA^\infty A}, A^\infty)$ is a projection,
\item[(ii)]
$$
({\Index}_wE)[E^\infty (e)] = 1,
$$
\item[(iii)]
A map 
$A \ni x \mapsto  [xe] \in N(\overline{AA^\infty A}, A^\infty)/A_{nn}(\overline{AA^\infty A}, A^\infty)$
is injective, 
\end{itemize}
where 
$N(\overline{AA^\infty A}, A^\infty) = \{x \in A^\infty \mid x\overline{AA^\infty A} + \overline{AA^\infty A}x  
\subset \overline{AA^\infty A}\}$, 
$Ann(\overline{AA^\infty A}, A^\infty) = \{x \in A^\infty| xa = ax = 0\ \hbox{for all}\ a \in \overline{AA^\infty A}\}$
, $E^\infty$ is the conditional expectation from $A^\infty$ onto $P^\infty$ defined by $E^\infty(x) = 
(E(x_n))$ for $x = (x_n)\in A^\infty$.
We call $e$ a Rokhlin element.
\end{dfn}

\vskip 3mm

\begin{rmk}\label{rmk:basic}
Let $A$ be a $\sigma$-unital C*-algebra. Then we have 
\begin{enumerate}
\item
$A \subset \overline{AA^\infty A}$,
\item
$A_{nn}(A, A^\infty) \subset A^{'} \cap A^\infty \subset N(\overline{AA^\infty A}, A^\infty)$.
Note that $A_{nn}(A, A^\infty) = A_{nn}(\overline{AA^\infty A}, A^\infty)$.
\item
If $A$ is unital, then $N(\overline{AA^\infty A}, A^\infty) = A^\infty$ and 
$A_{nn}(A, A^\infty) = \{0\}$. Hence, the definition of the Rokhlin property is equvalent to 
that in \cite[Definition~2.2]{OT}.
\item 
We have $\mathrm{Index}_p E^\infty = \mathrm{Index}_p E$ since  
$\dfrac{1}{\lambda}E^\infty - \mathrm{Id} \ \hbox{is positive }$ if and only if $ \dfrac{1}{\lambda}E - \mathrm{Id} \ \hbox{is positive }$.
\end{enumerate}
\end{rmk}

\vskip 3mm


\vskip 3mm

\begin{lem}\label{lem:relation}
Let $P \subset A$ be an inclusion of $\sigma$-unital C*-algebras 
and $E$ a conditional expectation from $A$ onto $P$
having $\Index_p E < \infty$ and the Rokhlin property
with  a Rokhlin element $e$, 
and let $e_p$ be the Jones projection for $E$. Suppose that $A$ is simple.
Then 
$$
(\Index_w E) [ee_pe] = [e]
$$
in $N(D(P, A_1^\infty), A_1^\infty)/A_{nn}(D(P, A_1^\infty),  A_1^\infty)$, where $D(P, A_1^\infty) = \overline{PA_1^\infty P}$.
\end{lem}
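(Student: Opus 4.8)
The plan is to pass the Rokhlin element $e$ into the basic construction algebra $A_1$ and its sequence algebra $A_1^\infty$, and to verify the identity $(\Index_w E)[e e_P e] = [e]$ by using the defining relation of the dual conditional expectation $E_1$ together with the central-sequence characterization of the Rokhlin property. First I would recall from Remark~\ref{dual conditional expectation} that, since $A$ is simple, $\widehat{E^{**}}(1) = \Index_w E$ is a scalar in $Z(M(A))$, and that the dual conditional expectation $E_1 \colon A_1 \to A$ satisfies $E_1(a_1 e_P a_2) = (\Index_w E)^{-1} a_1 a_2$ for $a_1, a_2 \in A$. In particular $E_1(e_P) = (\Index_w E)^{-1}$, i.e. $(\Index_w E) E_1(e_P) = 1$. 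Since $e \in A' \cap A^\infty$ commutes with $A$, and since $e_P$ together with $A$ generates $A_1$, one should first check that $e$ (viewed in $A_1^\infty$ via the inclusion $A^\infty \subset A_1^\infty$) lies in $A_1' \cap A_1^\infty$: it commutes with $A$ by hypothesis, and it commutes with $e_P$ because $e_P \in A^{**} \cap A_1$ and $e$ is a central sequence — this commutation is where a little care is needed, and I would derive it from $e$ being $E^\infty$-related to a projection in $F(A)$ and from the compatibility $E_1 \circ (\text{compression by } e_P)$ with $E$.

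The core computation is then: $E_1^\infty(e e_P e) = E_1^\infty(e^2 e_P)$ (using that $e$ commutes with $e_P$ and with $A$, hence with $A_1$) $= e^2 E_1^\infty(e_P) = (\Index_w E)^{-1} e^2$, and since $[e]$ is a projection in the relevant quotient, $[e^2] = [e]$, so $(\Index_w E)[E_1^\infty(e e_P e)] = [e]$. To upgrade this from an equation about $E_1^\infty$ applied to $e e_P e$ to the asserted equation $(\Index_w E)[e e_P e] = [e]$ itself, I would use that $e e_P e$ is a positive element dominated appropriately and that, in the quotient $N(D(P,A_1^\infty), A_1^\infty)/A_{nn}(D(P,A_1^\infty), A_1^\infty)$, the conditional expectation $E_1^\infty$ restricted to the image of $e e_P e$ acts as multiplication by the scalar $(\Index_w E)^{-1}$; combined with $[e^2 e_P] = [e e_P e]$ and the faithfulness/injectivity encoded in Definition~\ref{dfn:Rokhlin}(iii), this pins down $[e e_P e]$. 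More carefully, I expect the cleanest route is: $(\Index_w E)(e e_P e) = (\Index_w E) e^2 e_P$ and we want its class to equal $[e]$; apply $E_1^\infty$, which on $D(P, A_1^\infty)$ is injective modulo the annihilator ideal in the sense needed, to reduce to the already-verified $(\Index_w E) E_1^\infty(e^2 e_P) = e^2$ and $[e^2] = [e]$.

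The main obstacle will be the bookkeeping around the corners and multiplier/annihilator ideals in the non-unital ($\sigma$-unital) setting: one must ensure that $e e_P e$ genuinely lies in $N(D(P, A_1^\infty), A_1^\infty)$ and that its class is well-defined modulo $A_{nn}(D(P, A_1^\infty), A_1^\infty)$, that passing from $A^\infty$ to $A_1^\infty$ does not destroy the three Rokhlin conditions, and that $E_1^\infty$ descends to a well-defined, suitably injective map on the relevant quotient. I would handle this by invoking Remark~\ref{rmk:basic} for the analogous statements one level down (for $P \subset A$) and transporting them through the basic construction $A \subset A_1 \subset A_2$ guaranteed by Remark~\ref{dual conditional expectation}, using $\Index_p E_1 < \infty$. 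The commutation $[e, e_P] = 0$ in $A_1^\infty$ and the identity $E_1(e_P) = (\Index_w E)^{-1}$ are the two facts that do all the real work; everything else is verifying that the quotients and expectations are set up so that these two facts can be applied.
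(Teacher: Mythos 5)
There is a genuine gap, and it sits exactly at the point you flag as needing ``a little care.'' First, the commutation $[e,e_P]=0$ in $A_1^\infty$ that you say does ``the real work'' is not only unproved but false in general: from the Jones relation $e_Pxe_P=E(x)e_P$ one gets $e_Pee_P=E^\infty(e)e_P$, so if $e$ commuted with $e_P$ one would have $ee_P=E^\infty(e)e_P$, which together with the Rokhlin condition $(\Index_w E)[E^\infty(e)]=1$ is incompatible with $[e]$ being a (nonzero) projection unless the index is trivial. Fortunately your core computation does not need it: $E_1^\infty$ (the paper's $\hat{E}^\infty$) is an $A^\infty$-bimodule map, so $E_1^\infty(ee_Pe)=eE_1^\infty(e_P)e=(\Index_w E)^{-1}e^2$ directly; this is precisely the paper's observation that $\hat{E}^\infty(e-f)=0$ for $f=(\Index_w E)ee_Pe$.

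The fatal step is the upgrade from ``$(\Index_w E)E_1^\infty(ee_Pe)$ and $e$ have the same class'' to ``$(\Index_w E)[ee_Pe]=[e]$.'' You justify it by asserting that $E_1^\infty$ is ``injective modulo the annihilator ideal in the sense needed,'' but a conditional expectation of index finite is faithful, not injective (e.g.\ $E_1$ does not separate $e_P$ from the scalar $(\Index_w E)^{-1}$), and Definition~\ref{dfn:Rokhlin}(iii) concerns injectivity of $x\mapsto[xe]$ on $A$, which is irrelevant here. What makes the argument work in the paper is a positivity mechanism that your proposal never sets up: one first shows, using $e_Paee_P=aE^\infty(e)e_P$ and the Rokhlin condition $(\Index_w E)[E^\infty(e)]=1$, that $[f]$ is an idempotent with $[f]=[e][f][e]$, hence $[e-f]\geq 0$; then the finite-index inequality $\frac{1}{\lambda}\hat{E}^\infty-\mathrm{Id}\geq 0$ applied to this positive class, together with $[\hat{E}^\infty(e-f)]=0$, forces $a(e-f)a^*=0$ for all $a\in P$ and hence $[e-f]=0$. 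Without the idempotency of $[f]$ (and the resulting positivity of $[e-f]$), faithfulness of the dual expectation cannot be brought to bear, so your argument as written does not close.
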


\vskip 3mm

\begin{proof}
Since $A_{nn}(D(P, A_1^\infty), A_1^\infty) = A_{nn}(P, A_1^\infty)$, we have only to show that for any $a \in P$
\begin{align*}
&a(\Index_w E)ee_Pe = ae \\
&(\Index E)ee_Pe a = ea.
\end{align*}

Note that for any $a \in A$ $ae^2 = ae$ since $A \subset D(A, A^\infty)$ by Remark~\ref{rmk:basic} (1).

Set $f = (\Index_w E)ee_Pe$. Then, for any $a \in P$
\begin{align*}
af^2 &= a(\Index_w E)^2(ee_Pe)(ee_Pe)\\
&= (\Index_w E)^2(ee_Pae^2e_Pe) \\
&= (\Index_w E)^2(ee_Paee_Pe) \ (ae^2 = ae)\\
&=(\Index_w E)^2(eaE^\infty(e)e_Pe)\\
&= a(\Index_w E)ee_Pe\\
&= af.
\end{align*}

Similary, $f^2a = fa$ for any $a \in P$.
Hence $[f]^2 = [f]$ in $N(D(P, A_1^\infty)$, $A_1^\infty)/A_{nn}(D(P,A_1^\infty),A_1^\infty)$.

Note that 
\begin{align*}
\hat{E}^\infty(e - f) &= e - (\Index_w E)\hat{E}^\infty(ee_Pe)\\
&=e - e = 0.
\end{align*}

Since $f \leq e^2$, there exists $x \in N(D(P,A_1^\infty)$, $A_1^\infty)$ such that $[e -f] = [x^*x]$. 
Then, for any $a \in P$ $a(e-f) = a(x^*x)$. Therefore,
$[\hat{E}^\infty(e-f)] = 0$ and $\dfrac{1}{\lambda}\hat{E}^\infty - \mathrm{Id} \ \hbox{is positive }$ for som $\lambda >0$,  we have 
$$
0 \leqq a \left( \dfrac{1}{\lambda}\hat{E}^\infty(e-f) - (e-f)   \right)a^* = - a(e-f)a^*.= -a(x^*x)a^*
$$
for any $a \in P$. 
Therefore $a(e-f) = 0 = (e-f)a$ and 
$[e] = [f]$, that is, $(\Index_w E)[ee_Pe] = [e]$.
\end{proof}

\vskip 3mm



\begin{lem}\label{lem:D}
Let $P \subset A$ be an inclusion of $\sigma$-unital C*-algebras and $E$ be a conditional 
expectation from $A$ onto $P$ with $\Index_p E < \infty$. 
Suppose $E$ has the Rokhlin property with a Rokhlin element $e$.

Then, for $x \in A$,  $E^\infty(xe) \in D(P, P^\infty)$. 
\end{lem}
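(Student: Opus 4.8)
The plan is to show that $E^\infty(xe)$ can be approximated by elements of $D(P,P^\infty) = \overline{PP^\infty P}$, and since $D(P,P^\infty)$ is closed, this suffices. First I would exploit the fact that $e \in A' \cap A^\infty$ together with the basic relation $ae^2 = ae$ for $a \in A$ (from $A \subset \overline{AA^\infty A}$, Remark~\ref{rmk:basic}(1)). Fixing an approximate unit $(u_\lambda)$ for $P$ (and hence for $A$, since the inclusion has finite index so $P$ contains an approximate unit for $A$ up to the standard argument), one expects $u_\lambda e u_\lambda \to e$ in the relevant sense inside $\overline{AA^\infty A}$; applying $E^\infty$, which is $P$-bimodular, gives $u_\lambda E^\infty(xe) u_\lambda = E^\infty(u_\lambda x e u_\lambda)$. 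The key point is that each such element lies in $P^\infty$ sandwiched between elements of $P$, hence in $D(P,P^\infty)$.

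More carefully, the main step is to justify that $x e$ itself lies in $\overline{AA^\infty A}$ (equivalently $D(A,A^\infty)$), so that $xe$ is a limit of sums $\sum a_i y_i b_i$ with $a_i, b_i \in A$, $y_i \in A^\infty$. Since $E^\infty$ is continuous and $E^\infty(a_i y_i b_i)$ need not obviously sit in $D(P,P^\infty)$, I would instead work one-sidedly: using $e = e^{1/2} e^{1/2}$ is not available (e is only a positive contraction, not a projection in $A^\infty$ itself), so I would use the relation $xe = (xe)u_\lambda + (xe)(1-u_\lambda)$ heuristically — more precisely, that $\|xe - xeu_\lambda u_\lambda\| \to 0$ holds because $xe \in \overline{AA^\infty A} = \overline{A^\infty A A^\infty}$-type closure, combined with $e$ commuting with $A$. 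Then $E^\infty(xe u_\lambda) = E^\infty(x e) u_\lambda$ by $P$-linearity on the right, but to bring it into $PP^\infty P$ I also multiply on the left, using $E^\infty(u_\mu x e u_\lambda) = u_\mu E^\infty(xe) u_\lambda$ (valid once $u_\mu$ is central enough or simply $u_\mu \in P$ on the left of $E^\infty$), and each $u_\mu E^\infty(xe) u_\lambda \in P \cdot P^\infty \cdot P \subset D(P,P^\infty)$.

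The genuine obstacle is the convergence $u_\mu E^\infty(xe) u_\lambda \to E^\infty(xe)$: this is equivalent to knowing $E^\infty(xe) \in \overline{PP^\infty P}$, which is what we are trying to prove, so the argument must instead run entirely at the level of $xe$ before applying $E^\infty$. Thus the real claim to establish is $xe \in \overline{AA^\infty A}$ and moreover that approximations of $xe$ can be taken in the form $\sum_i a_i z_i b_i$ with $a_i,b_i \in P$ (not merely in $A$); this is where finite index enters, via a quasi-basis $\{(u_j,u_j^*)\}$ for $E$, writing $x = \sum_j u_j E(u_j^* x)$ so that $xe = \sum_j u_j E(u_j^*x) e$ and $E^\infty(xe) = \sum_j E^\infty(u_j e) E(u_j^* x)$ — but this still leaves $E^\infty(u_j e)$, reducing to the case $x = u_j$, i.e.\ it suffices to prove $E^\infty(ve) \in D(P,P^\infty)$ for $v$ ranging over a quasi-basis. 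At that point one can use the bimodule structure: $e$ is $P$-central (a fortiori), $E^\infty(ve) = E^\infty(ev) = E^\infty(e) v$-type manipulations do not directly apply since $v \notin P$, so the cleanest route is: $E^\infty(ve) = \lim_\lambda E^\infty(v e u_\lambda)$ and $veu_\lambda = v u_\lambda e + v[e,u_\lambda]$ with $[e,u_\lambda] = 0$ (as $e \in A'\cap A^\infty$, $u_\lambda \in A$), so $E^\infty(veu_\lambda) = E^\infty(v u_\lambda e)$; now $v u_\lambda \in A$ but we also sandwich on the left by $u_\mu \in P$, and crucially $\|u_\mu v u_\lambda - v\|$ is not small, so one accepts that the limit object $E^\infty(xe)$ is obtained as a norm-limit of $u_\mu E^\infty(x e u_\lambda) u_\nu \in PP^\infty P$ precisely when $xe \in \overline{A^\infty}$-closure is controlled; I expect the paper handles this by first noting $xe \in \overline{AA^\infty A}$ directly from $x \in A$ and $e \in A^\infty$ with $xe = x \cdot e$, $x = x \cdot x^* (x x^*)^{1/n}$-type factorizations, making the left and right factors live in $A$, then applying $E^\infty$ which maps $A$-factors to $P$-factors since $E^\infty|_A$ is onto... no: $E^\infty$ of $a z b$ is not $E^\infty(a) z E^\infty(b)$. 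Hence the honest main obstacle is exactly packaging $xe$ as a limit of $P$-sandwiched elements, and I would resolve it by the quasi-basis expansion combined with $e$'s centrality, reducing to $E^\infty(u_j e) = E^\infty(e u_j)$ — and here, writing $u_j e = \lim u_j e u_\lambda = \lim u_j u_\lambda e$, applying $E^\infty$: $E^\infty(u_j u_\lambda e)$, and since $u_j u_\lambda \in A$ we use $E^\infty(u_j u_\lambda e) = E^\infty((u_j u_\lambda) e)$ with no further simplification, so ultimately the statement should follow by observing $E^\infty(xe) = E^\infty(xe)$ lies in $\overline{P E^\infty(A^\infty e) P}$ and $E^\infty(A^\infty e) \subset P^\infty$, giving membership in $\overline{P P^\infty P} = D(P,P^\infty)$ once one checks $E^\infty(xe)$ absorbs a two-sided approximate unit from $P$ — which holds because for $a \in P$, $a E^\infty(xe) = E^\infty(axe) = E^\infty(xae) = E^\infty(x)E^\infty(ae)$-ish bounded manipulations show the left/right supports of $E^\infty(xe)$ are dominated by those of $E^\infty(e)$, and $E^\infty(e) \in D(P,P^\infty)$ since $(\Index_w E)[E^\infty(e)] = 1$ forces $E^\infty(e)$ to have full support in the corner $\overline{PP^\infty P}$; this last point — controlling supports via condition (ii) of the Rokhlin property — is the crux and where I would focus the rigorous argument.
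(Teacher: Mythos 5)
There is a genuine gap: your proposal never closes the argument, and you say so yourself at the end (``this last point \dots\ is the crux''). The several routes you try --- approximate units, quasi-basis expansions, writing $xe$ as a limit of $P$-sandwiched elements --- all stall exactly where you notice they stall, and your final idea of dominating the supports of $E^\infty(xe)$ by those of $E^\infty(e)$ via condition (ii) of the Rokhlin property is not the right domination and is not needed. The observation you are missing is much simpler: for $x\ge 0$, since $e$ is a positive contraction commuting with $A$, one has $xe = x^{1/2}ex^{1/2}$, hence
\begin{equation*}
0 \;\le\; E^\infty(xe) \;=\; E^\infty\bigl(x^{1/2}ex^{1/2}\bigr) \;\le\; E^\infty(x) \;=\; E(x)\ \in\ P .
\end{equation*}
That is, $E^\infty(xe)$ is dominated by an element of $P$ itself, not merely by $E^\infty(e)$.

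Once you have $0\le E^\infty(xe)\le E(x)$ with $E(x)\in P$, the standard factorization result (Pedersen, \emph{C*-algebras and their automorphism groups}, Proposition~1.4.5) gives $u\in P^\infty$ and $0<\alpha<1/2$ with $E^\infty(xe)^{1/2}=uE(x)^\alpha$, so that $E^\infty(xe)=E(x)^\alpha u^*uE(x)^\alpha\in P\,P^\infty P\subset \overline{PP^\infty P}=D(P,P^\infty)$; the general case follows by writing an arbitrary $x\in A$ as a linear combination of four positive elements. Note that this argument uses only that $e\in A'\cap A^\infty$ is a positive contraction --- none of the Rokhlin conditions (i)--(iii), no quasi-basis, and no approximate-unit bookkeeping is required. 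Your instinct that the answer should come from a hereditary-subalgebra/domination principle was correct; the gap is that you looked for the dominating element in the wrong place.
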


\vskip 3mm

\begin{proof}
Let $x \in A$ and $x \geq 0$. 
Then 
$0 \leq E^\infty(xe) = E^\infty(x^{1/2}ex^{1/2}) \leq E^\infty(x) = E(x)$. 
By \cite[Proposition~1.4.5]{GP} there is a $u \in P^\infty$ and $0 < \alpha < 1/2$ such that 
$E^\infty(xe)^{\frac{1}{2}} = uE(x)^\alpha$.  Hence $E^\infty(xe) = E(x)^\alpha uu^* E(x)^\alpha \in \overline{PP^\infty P} = D(P, P^\infty)$.
\end{proof}

\vskip 3mm

The following implies that the inclusion map $P \subset A$ is a sequentially-split *-homomorphism in the sense of 
Barlak and Szab\'o \cite{BS}.

\vskip 3mm

\begin{lem}\label{lem:basic}
Let $P \subset A$ be an inclusion of $\sigma$-unital C*-algebras and $E$ be a conditional 
expectation from $A$ onto $P$ with $\Index_p E < \infty$. 
If $A$ is simple and $E$ has the Rokhlin property with a Rokhlin element $e \in A^\infty$, then there is  
a *-homomorphism $\beta :A \rightarrow P^\infty$ 
such that $\beta(x)  = x$ for all $x \in P$.
\end{lem}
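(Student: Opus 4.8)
The plan is to construct $\beta$ explicitly by
\[
\beta(x) = (\Index_w E)\, E^\infty(xe),\qquad x\in A ,
\]
which makes sense because, $A$ being simple, $\Index_w E=\widehat{E^{**}}(1)$ is a positive scalar by Remark~\ref{dual conditional expectation}. First come the easy structural properties. That $\beta$ maps $A$ into $P^\infty$, indeed into $D(P,P^\infty)$, is precisely Lemma~\ref{lem:D}; linearity and boundedness are clear; $\beta$ is $*$-preserving because $e=e^*\in A'\cap A^\infty$ gives $(xe)^*=x^*e$ and $E^\infty$ is a $*$-map; and $\beta|_P=\id$, since for $p\in P$ one has $\beta(p)=(\Index_w E)\,p\,E^\infty(e)$ while condition~(ii) says exactly that $(\Index_w E)E^\infty(e)-1$ lies in $A_{nn}(P,P^\infty)$, which annihilates $p$ — and, more generally, everything in $D(P,P^\infty)$. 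It is also useful to record that $\beta$ is completely positive, being $(\Index_w E)\,E^\infty\!\big(e^{1/2}(\cdot)\,e^{1/2}\big)$ on $A$, and that $\beta(1)\equiv 1$ modulo $A_{nn}(P,P^\infty)$.

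The substantive point is multiplicativity. Here I would pass to the basic construction $P\subseteq A\subseteq A_1$ furnished by Remark~\ref{dual conditional expectation}, with Jones projection $e_P$, and compute inside $A_1^\infty$ (note $e\in A^\infty\subseteq A_1^\infty$). Put $w=(\Index_w E)^{1/2}\,e\,e_P$. Using $e_Pze_P=E^\infty(z)e_P$ for $z\in A^\infty$, the centrality of $e$ in $A$, and the identity $xe^2=xe$ for $x\in A$ (Remark~\ref{rmk:basic}(1)), one obtains the \emph{exact} identities $w^*xw=\beta(x)e_P$ and $w^*x^*e\,xw=w^*(x^*x)w=\beta(x^*x)e_P$. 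On the other hand $ww^*=(\Index_w E)\,ee_Pe$, which by Lemma~\ref{lem:relation} equals $e$ modulo $A_{nn}(D(P,A_1^\infty),A_1^\infty)=A_{nn}(P,A_1^\infty)$; writing $\eta:=ww^*-e$ for this annihilator element, one gets
\[
\big(\beta(x)^*\beta(x)-\beta(x^*x)\big)\,e_P \;=\; w^*x^*\,\eta\,x\,w .
\]
It remains to deduce that $\beta(x)^*\beta(x)-\beta(x^*x)$ vanishes. This element lies in $D(P,P^\infty)$ by Lemma~\ref{lem:D}, and by complete positivity of $\beta$ together with $\beta(1)\equiv 1$ it is $\le 0$ there; I would show it is annihilated by $P$, using the displayed identity, the dual conditional expectation $E_1$, and the positivity estimate against elements of $P$ employed in the proof of Lemma~\ref{lem:relation}. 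Since any element of $D(P,P^\infty)$ annihilated by $P$ is $0$, we obtain $\beta(x^*x)=\beta(x)^*\beta(x)$ for all $x\in A$, and the Kadison--Schwarz (multiplicative-domain) argument promotes this to $\beta$ being a $*$-homomorphism; together with $\beta|_P=\id$ the proof is complete.

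The step I expect to be the main obstacle is this last one: converting the ``modulo $A_{nn}(\cdot)$'' content of Lemma~\ref{lem:relation} and of condition~(ii) into the honest vanishing of $\beta(x)^*\beta(x)-\beta(x^*x)$. In the unital case of \cite{OT} all the relevant annihilators are trivial, $w$ is a genuine partial isometry with $ww^*=e$, and the computation collapses to a one-line check; in the $\sigma$-unital case one has to work carefully with the hereditary subalgebra $D(P,A_1^\infty)$, the identity $A_{nn}(D(P,A_1^\infty),A_1^\infty)=A_{nn}(P,A_1^\infty)$, and the fact that $A_{nn}(D(P,A_1^\infty),A_1^\infty)$ is an ideal of the normalizer $N(D(P,A_1^\infty),A_1^\infty)$ — exactly the kind of bookkeeping already carried out in the proofs of Lemmas~\ref{lem:relation} and~\ref{lem:D}.
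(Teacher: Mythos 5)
Your construction is the same map as the paper's ($\beta(x)=(\Index_w E)E^\infty(xe)$, which the paper packages as $\pi\circ\rho$ with values in $M(D(P,P^\infty))$), and the easy parts are fine: the range statement via Lemma~\ref{lem:D}, linearity and $*$-preservation, $\beta|_P=\id$ from condition (ii) of Definition~\ref{dfn:Rokhlin}, and also your exact identity $\bigl(\beta(x)^*\beta(x)-\beta(x^*x)\bigr)e_P=w^*x^*\eta x w$ with $w=(\Index_w E)^{1/2}ee_P$ and $\eta=(\Index_w E)ee_Pe-e\in A_{nn}(D(P,A_1^\infty),A_1^\infty)$ checks out. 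The genuine gap is exactly the step you flag as the main obstacle: none of the tools you list actually forces this expression (equivalently $p\bigl(\beta(x)^*\beta(x)-\beta(x^*x)\bigr)$ for $p\in P$) to vanish. Applying $E_1^\infty$ to the displayed identity is circular: expanding $w^*x^*\eta xw$ and compressing by $e_P$ merely reproduces the definition of $\beta(x)^*\beta(x)-\beta(x^*x)$, so no new information comes out. The annihilation property of $\eta$ cannot be invoked, because in $w^*x^*\eta xw$ the factors adjacent to $\eta$ are of the form $xe\,e_P$, i.e.\ elements of $A^\infty e_P$, not of $\overline{PA_1^\infty P}$; relatedly, the exact relation one can extract from Lemma~\ref{lem:relation}, namely $xea=\beta(x)ea$ for $a\in P$, does not extend to coefficients $a\in P^\infty$ (note $P^\infty\not\subset\overline{PA_1^\infty P}$), which is what a direct proof of $\beta(x^*x)=\beta(x)^*\beta(x)$ along your lines would need. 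Finally, the claimed inequality $\beta(x)^*\beta(x)\le\beta(x^*x)$ ``by complete positivity and $\beta(1)\equiv 1$'' is not available: $A$ is only $\sigma$-unital, $\beta$ is not known to be contractive at this stage (a priori $\|\beta\|$ can be as large as $\Index_w E$), and the unitality of $\beta$ only makes sense in the quotient/multiplier picture you are trying to bypass.

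The missing idea is precisely the mechanism the paper uses for multiplicativity: pass to $N(D(P,P^\infty),P^\infty)/A_{nn}(P,P^\infty)\cong M(D(P,P^\infty))$, prove $[xe]=[\beta(x)e]$ for every $x\in A$, and then use condition (ii), $(\Index_w E)[E^\infty(e)]=1$, to see that $[\beta(x)]$ is the \emph{unique} class $[y]$ with $y\in P^\infty$ and $[xe]=[ye]$ (since $[y]=(\Index_w E)[E^\infty(ye)]$). Multiplicativity then drops out of this uniqueness together with $e\in A'\cap A^\infty$, via $[x_1x_2e]=[x_1ex_2]=\rho(x_1)[x_2e]=\rho(x_1)\rho(x_2)[e]$, and only afterwards does one identify the classes with honest elements of $D(P,P^\infty)\subset P^\infty$ through the isomorphism with the multiplier algebra. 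In your plan condition (ii) enters only to get $\beta|_P=\id$; the uniqueness argument it powers, which is the actual engine of multiplicativity, is absent, and without it (or an equivalent substitute) the final step of your argument does not close.
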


\begin{proof}
Let $e_p$ be the Jones projection for the inclusion $P \subset A$.
Then, for any $x \in A$ 

and any $a \in P$
\begin{align*}
axe &= \Index_w E)\hat{E}^\infty(e_Pxae)\\
&= (\Index_w E)^2\hat{E}^\infty(e_Pxaee_Pe)\\
&= (\Index_w E^2\hat{E}^\infty(aE^\infty(xe)e_Pe)\\
&= a(\Index_w E)E^\infty(xe)e,
\end{align*}
where $\hat{E}$ is the dual conditional expectation for $E$.
Put $y = (\Index_w E)E^\infty(xe)\in N(D(P, P^\infty), P^\infty)$ by Lemma~\ref{lem:D}. 
Then we have $axe = aye$.for any $a \in P$. 
Similarly, we have $xea = yea$ for any $a \in P$. Hence, $[xe] =[ye]$ in 
$N(D(P, A^\infty), A^\infty)/A_{nn}(D(P,A^\infty),A^\infty)$.

Suppose that $[ye] = [ze]$ for $y, z \in P^\infty$. Then
$$
[z] = [z](\Index_w E)[E^\infty(e)] = (\Index_w E)[E^\infty(ze)] = (\Index_w E)[E^\infty(ye)] = [y].
$$
Therefore, we obtain the uniqueness of $[y]$ as follows. Set $\rho(x) = [y]$.
Then $\rho$ is a linear map from $A$ to $N(D(P, P^\infty), P^\infty)/A_{nn}(P, P^\infty)$. 
In particular, $\rho(x) = [x]$ for all $x \in P$. 

Homomorphism property of $\rho$: Let $x_1, x_2 \in A$ and $\rho(x_1), \rho(x_2) \in 
N(D(P, P^\infty), P^\infty)/A_{nn}(P, P^\infty)$ 
such that $[x_1e] =\rho(x_1)[e]$, $[x_2e] = \rho(x_2)[e]$.

The we have
\begin{align*}
[x_1x_2e] &= [x_1ex_2]\\
&= \rho(x_1)[ex_2]\\
&= \rho(x_1)[x_2e]\\
&= \rho(x_1)\rho(x_2)[e]. \\
\end{align*}

 From the uniqueness, we have $\rho(x_1x_2) = \rho(x_1)\rho(x_2)$. The *-preserving property for $\rho$ can be proved 
as in the same step.

Since there is an isomorphosm $\pi$ from $N(D(P, P^\infty), P^\infty)/A_{nn}(P, P^\infty)$ to the multiplier algebra 
$M(D(P, P^\infty))$ of $D(P, P^\infty)$ by \cite[Proposition 1.5 (2)]{BS} such that 
$\pi(m)a = ma$, $a\pi(m) = am$ for $m \in N(D(P, P^\infty), P^\infty)$ and $a \in D(P, P^\infty)$, a map $\beta = \pi \circ \rho$ is a *-homomorphism from $A$ to $M(D(P, P^\infty))$ such that $\beta(x) = x$ for all $x \in P$.

Since for $x \in A$  $\beta(x) = \pi([E^\infty(xe)]) \in D(P, P^\infty) \subset P^\infty$, $\beta$ is a required *-homomorphism.
\end{proof}

\vskip 3mm
\section{Main Result}


Recall that for $\sigma$-unital C*-algebras $A$ and $B$ a *-homomorphism $\varphi\colon A \rightarrow B$ is called sequetially-split if there exists a *-homomorphism $\psi\colon B \rightarrow A^\infty$ such that $(\psi \circ \varphi)(a) = a$ for $a \in A$ \cite{BS}.


\begin{thm}\label{thm:sequentially}
Let $P \subset A$ be an inclusion of $\sigma$-unital C*-algebras and 
$E:A \rightarrow P$ be a faithful conditional expectation with $\Index_p E < \infty$.
Suppose that $A$ is simple. Then an inclusion map $P \rightarrow A$ is a sequentially-split *-homomorphism. 
\end{thm}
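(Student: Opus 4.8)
The plan is to deduce the theorem directly from Lemma~\ref{lem:basic}. Let $e \in A' \cap A^\infty$ be a Rokhlin element for $E$; it is this Rokhlin hypothesis on $E$ that carries the content of the statement (the remaining hypotheses are met, for example, by the index-two inclusion $\C \subset M_2(\C)$ with its tracial conditional expectation, which admits no splitting, since $M_2(\C)$ is simple but embeds in no commutative algebra). Since $A$ is simple and $\Index_p E < \infty$, Lemma~\ref{lem:basic} furnishes a $*$-homomorphism $\beta\colon A \to P^\infty$ with $\beta(x) = x$ for all $x \in P$; concretely, $\beta(x)$ is the image under the isomorphism $\pi$ of the class of $(\Index_w E)\,E^\infty(xe)$, which by Lemma~\ref{lem:D} lies in $D(P, P^\infty) \subset P^\infty$, so the codomain of $\beta$ is genuinely $P^\infty$ and not merely $M(D(P, P^\infty))$.

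It then remains only to recognize this as the splitting condition of Barlak and Szab\'o recalled at the start of the section: a $*$-homomorphism $\varphi\colon B \to C$ is sequentially-split precisely when there is a $*$-homomorphism $\psi\colon C \to B^\infty$ with $\psi\circ\varphi = \id_B$, under the canonical identification $B \subset B^\infty$. Applying this with $B = P$, $C = A$, and $\varphi$ the inclusion $P \hookrightarrow A$, the map $\psi := \beta$ satisfies $(\psi\circ\varphi)(a) = \beta(a) = a$ for every $a \in P$. Hence the inclusion $P \to A$ is a sequentially-split $*$-homomorphism, as claimed.

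I do not anticipate a genuine obstacle specific to this theorem. The substantive work --- manufacturing from the central Rokhlin projection $[e]$ an honest $*$-homomorphism that splits the inclusion, by means of the identity $axe = a(\Index_w E)E^\infty(xe)e$ for $a \in P$, the uniqueness of the class $[(\Index_w E)E^\infty(xe)]$, and the isomorphism $N(D(P,P^\infty),P^\infty)/A_{nn}(P,P^\infty) \cong M(D(P,P^\infty))$ --- has already been carried out in Lemmas~\ref{lem:relation}, \ref{lem:D} and \ref{lem:basic}. The present statement merely repackages that construction in the language of \cite{BS}, in the form needed to apply their permanence machinery and thereby transport the properties $(1)$--$(12)$ from $A$ to $P$. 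The single point deserving attention is the codomain issue noted above, namely that $\beta$ really takes values in $P^\infty$, so that $\psi\colon A \to P^\infty$ is literally meaningful, and this is exactly where Lemma~\ref{lem:D} is invoked.
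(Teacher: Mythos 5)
Your proof is essentially the paper's: both simply invoke Lemma~\ref{lem:basic} to produce $\beta\colon A \rightarrow P^\infty$ restricting to the identity on $P$ and observe that this is exactly the Barlak--Szab\'o splitting condition for the inclusion $\iota\colon P \hookrightarrow A$. Your side remark that the Rokhlin hypothesis on $E$ must be added to the statement (it is needed to apply Lemma~\ref{lem:basic} but is omitted from the theorem as printed) is correct and worth recording, and your $\C \subset M_2(\C)$ example does show the statement fails without it.
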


\vskip 3mm

\begin{proof}
Let $\iota\colon P \rightarrow A$ be an inclusion map.
From Lemma~\ref{lem:basic} 
there exists the map $\beta\colon A \rightarrow P^\infty$ such that 
$\beta \circ \iota (x) = x$ for $x \in P$. This means that the map $\iota$ is a sequentially-split *-homomorphism.
\end{proof}

\vskip 3mm

\begin{thm}\label{thm:main theorem}
Let $P \subset A$ be an inclusion of $\sigma$-unital C*-algebras and $E$ be a conditional expectation from $A$ onto $P$ with $\mathrm{Index}_pE < \infty$. If $A$ is simple and satisfies any of the property $(1) \sim (12)$
 listed in the below, 
 and $E$ has the Rokhlin property, then so does $P$.

\vskip 2mm

\begin{enumerate}
\item[(1)]
Simplicity;
\item[(2)]
Nuclearity;
\item[(3)]
C*-algebras that absorb a given strongly self-absorbing C*-algebra $\mathcal{D}$;
\item[(4)]
C*-algebras of stable rank one;
\item[(5)]
C*-algebras of real rank zero;
\item[(6)]
C*-algebras of nuclear dimension at most $n$, where $n \in \Z^+$;
\item[(7)]
C*-algebras of decomposition rank at most $n$, where $n \in \Z^+$;
\item[(8)]
Separable simple C*-algebras that are stably isomorphic to AF algebras;
\item[(9)]
Separable simple C*-algebras that are stably isomorphic to AI algebras;
\item[(10)]
Separable simple C*-algebras that are stably isomorphic to AT algebras;
\item[(11)]
Separable simple C*-algebras that are stably isomorphic to 
sequential direct limits of one dimensional NCCW complexes;
\item[(12)] 
Separable C*-algebras with strict comparison of positive elements.
\end{enumerate}

\end{thm}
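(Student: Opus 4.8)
The plan is to deduce Theorem~\ref{thm:main theorem} from Theorem~\ref{thm:sequentially} together with the permanence results of Barlak--Szab\'o \cite{BS} for sequentially-split $*$-homomorphisms. By Theorem~\ref{thm:sequentially}, the inclusion map $\iota\colon P \hookrightarrow A$ is a sequentially-split $*$-homomorphism whenever $A$ is simple and $E$ has finite Izumi index. The point of \cite{BS} is precisely that if $\varphi\colon B \to A$ is sequentially-split, then many structural properties pass from $A$ to $B$; concretely, $B$ is a $*$-subalgebra of $A$ (up to the identification $\psi\circ\varphi=\id$) and the regularity properties in the list above are inherited by domains of sequentially-split maps. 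So for each of $(1)$--$(12)$ I would simply invoke the corresponding permanence statement in \cite{BS} (and its follow-ups) applied to $\iota$.

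In more detail, I would organize the proof as a case analysis keyed to the list. For (1) simplicity, (2) nuclearity, (3) $\mathcal D$-absorption for a strongly self-absorbing $\mathcal D$, (4) stable rank one, (5) real rank zero, (6) nuclear dimension $\le n$, and (7) decomposition rank $\le n$: each of these is one of the properties shown in \cite[Section~2 and Section~3]{BS} to be inherited by the domain of a sequentially-split $*$-homomorphism, so applying that to $\iota\colon P\to A$ with $A$ having the property gives the property for $P$. For (8)--(11), the statements are about being stably isomorphic to an AF (resp.\ AI, AT, or a sequential limit of one-dimensional NCCW complexes) algebra; here one uses that sequentially-split maps are compatible with stabilization (if $\varphi$ is sequentially-split then so is $\varphi\otimes\id_{\mathcal K}$) and that these classes are closed under passing to domains of sequentially-split maps, again by \cite{BS} (using classification-type results identifying such algebras). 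For (12), strict comparison of positive elements is likewise preserved under sequentially-split maps by the Cuntz-semigroup functoriality established in \cite{BS}. In each case simplicity of $A$ is what licenses Theorem~\ref{thm:sequentially}, and separability (where assumed in the list) is what licenses the relevant classification input.

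The main obstacle I anticipate is not any single deep argument but rather bookkeeping: making sure that for every one of the twelve properties there is a citable statement asserting that domains of sequentially-split $*$-homomorphisms inherit it, and being careful about the hypotheses (separable vs.\ $\sigma$-unital, simple vs.\ not) under which each such statement holds. In particular, for the ``stably isomorphic to'' items (8)--(11) and for (12), one must check that the class in question is genuinely stated in \cite{BS} (or follows formally from what is there, e.g.\ by tensoring with $\mathcal K$ and using that $P^\infty$-valued splittings restrict appropriately); if some item is not literally in \cite{BS}, I would supply the short reduction. I would also remark that $P$ inherits $\sigma$-unitality from $A$ automatically, so the hypotheses of the cited permanence results are met.

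Finally, I would close by spelling out the application to finite group actions with the Rokhlin property: by Proposition~\ref{prp:Nawata} the canonical conditional expectation $E\colon A \to A^\alpha$ has the Rokhlin property precisely when $\alpha$ has the Rokhlin property in the sense of Nawata, and $E$ automatically has finite Izumi index (bounded by $|G|$); hence Theorem~\ref{thm:main theorem} applies to the inclusion $A^\alpha \subset A$, giving the properties $(1)$--$(12)$ for $A^\alpha$, and the analogous statement for $A\rtimes_\alpha G$ follows from the standard Morita-type relationship (or the basic-construction picture recalled in Remark~\ref{dual conditional expectation}) between $A^\alpha$, $A$, and $A\rtimes_\alpha G$.
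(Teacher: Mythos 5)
Your proposal is correct and follows exactly the paper's route: the paper's proof is the one-line observation that the result follows from Theorem~\ref{thm:sequentially} (the inclusion $P \hookrightarrow A$ is sequentially split) combined with the permanence theorem of Barlak--Szab\'o \cite[Theorem~2.9]{BS}, which is precisely your plan. Your additional bookkeeping about which properties appear in \cite{BS} and the closing remarks on fixed-point algebras are elaborations the paper leaves implicit (the latter appearing separately in Proposition~\ref{prp:Nawata} and the remark following it), but the substance of the argument is the same.
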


\vskip 3mm

\begin{proof}

It follows from Theorem~\ref{thm:sequentially} and \cite[Theorem~2.9]{BS}.
\end{proof}

\vskip 3mm

Using Theorem~\ref{thm:main theorem} we can prove the permanence property for crossed products $A \rtimes_\alpha G$  by actions $\alpha$ from finite groups $G$ on a $\sigma$-unirtal simple C*-algebra $A$ with the Rokhlin property in the sense of Nawata \cite{NN} as below.

\vskip 3mm

\begin{dfn}\cite[Definition~3.1]{NN}\label{dfn:Nawata}
An action $\alpha$ of a finite group $G$ on a $\sigma$-unital C*-algebra $A$ is said
to have the Rokhlin property if there exists a partition of unity $\{e_g\}_{g \in G } \subset  A^{'} \cap A^\infty/A_{nn}(A, A^\infty)$
consisting of projections satisfying $\alpha_g(e_h) = e_{gh}$ for any $g, h \in G$.
A family of projections $\{e_g\}_{g\in G}$ is called Rokhlin projections of $\alpha$.
\end{dfn}

\vskip 3mm

When $A$ is separable, Definiyion~\ref{dfn:Nawata} is equivalent to the following.

\vskip 3mm

\begin{dfn}\cite[Corollary~2]{LS}\label{dfn:Santiago}
Let $A$ be a $\sigma$-unital C*-algebra  and let $\alpha:G \rightarrow \Aut(A)$ be an action of a finite group $G$ on 
$A$. An action $\alpha$ has the Rokhlin property if for any $\varepsilon > 0$ and any finite subset $F \subset A$ 
there exist mutually orthogonal positive contractions $(r_g)_{g\in G} \subset A$ such that 
\begin{enumerate}
\item[(i)] $||\alpha_g(r_h) - r_{gh}|| < \varepsilon$, for all $g,h \in G$;
\item[(ii)] $||r_ga - ar_g|| < \varepsilon$, for all $a \in F$ and $g \in G$;
\item[(iii)] $||(\sum_{g\in G}r_g)a - a|| < \varepsilon$, for all $a \in F$.
\end{enumerate}
\end{dfn}

\vskip 3mm

\begin{prp}\label{prp:Nawata}
Let $A$ be $\sigma$-unital simple C*-algebra  and let $\alpha:G \rightarrow \Aut(A)$ be an action of a finite group $G$ on 
$A$. 
Then  $\alpha$ has the Rokhlin property in the sense of Nawata if and only if 
the canonical conditional expectation $E:A \rightarrow A^\alpha$ has the Rokhlin property in the sense  of 
Definition~\ref{dfn:Rokhlin}.
\end{prp}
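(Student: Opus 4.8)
The plan is to realize the equivalence through the canonical conditional expectation $E=\frac1{|G|}\sum_{g\in G}\af_g\colon A\to A^\af$, which is faithful, has $\Index_p E\le|G|<\infty$, and has $\Index_w E=|G|$ (the last because $\af$ is pointwise outer, being Rokhlin, cf.\ \cite{NN}, so that the C*-basic construction of $A^\af\subset A$ equals $A\rtimes_\af G$, cf.\ \cite{Izumi:inclusion}). Two elementary facts will be used repeatedly: (a) each $\af_g^\infty$ preserves $A'\cap A^\infty$, so $\af$ acts on $F(A)=A'\cap A^\infty/A_{nn}(A,A^\infty)$, and $E^\infty=\frac1{|G|}\sum_g\af_g^\infty$ sends $A'\cap A^\infty$ into $P'\cap P^\infty$; (b) the canonical maps $F(A)\to M(D(A,A^\infty))$ and $F(P)\to M(D(P,P^\infty))$ are injective, so an element of $F(A)$ (resp.\ $F(P)$) is a projection, resp.\ the unit, exactly when its image in $M(D(A,A^\infty))$ (resp.\ $M(D(P,P^\infty))$) is.

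For the implication ``$\af$ Rokhlin $\Rightarrow$ $E$ Rokhlin'' I would fix Rokhlin projections $\{e_g\}_{g\in G}\subset F(A)$, lift $e_1$ to a positive contraction $e\in A'\cap A^\infty$ so that $\af_g^\infty(e)$ lifts $e_g$, and verify that $e$ is a Rokhlin element. Condition (i) of \DD{dfn:Rokhlin} is immediate from (b), since $[e]=e_1$ is a projection in $F(A)$. For (ii): the element $|G|E^\infty(e)=\sum_g\af_g^\infty(e)\in P'\cap P^\infty$ has class $\sum_g e_g=1$ in $F(A)$, hence acts as the identity on $A$ modulo $A_{nn}(A,A^\infty)$; restricting to $p\in P$ and using that $|G|E^\infty(e)p=|G|E^\infty(pe)\in D(P,P^\infty)$ by \LL{lem:D}, that $P\subset D(P,P^\infty)$, and that $A_{nn}(P,P^\infty)\cap D(P,P^\infty)=\{0\}$, one gets the exact identity $|G|E^\infty(e)p=p$, whence $[\,(\Index_w E)E^\infty(e)\,]=1$ in $M(D(P,P^\infty))$. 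Condition (iii) is the subtle one; here I would note that the proof of \LL{lem:basic} uses only properties (i) and (ii) of a Rokhlin element, so it already provides a $*$-homomorphism $\bt\colon A\to P^\infty$, $\bt(x)=\pi([E^\infty(xe)])$, restricting to $\id$ on $P$, which is injective because $A$ is simple and $\bt\ne0$; then $[xe]=0$ forces $xe\in A_{nn}(A,A^\infty)$, so $E^\infty(xe)$ annihilates $P$ while lying in $D(P,P^\infty)$ (\LL{lem:D}), forcing $E^\infty(xe)=0$, hence $\bt(x)=0$ and $x=0$.

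For the converse ``$E$ Rokhlin $\Rightarrow$ $\af$ Rokhlin'' I would start from a Rokhlin element $e\in A'\cap A^\infty$. By (i) and (b), $[e]$ is a projection in $F(A)$, and it is nonzero (otherwise $[xe]=0$ for all $x$ and (iii) would give $A=0$). Put $e_g:=\af_g([e])\in F(A)$; these are projections and $\af_g(e_h)=e_{gh}$. By (ii), $z:=(\Index_w E)E^\infty(e)$ acts as the identity on $P\subset D(P,P^\infty)$; since $z\in A'\cap A^\infty$ is central in $A^\infty$ and $A=\overline{APA}$ by simplicity, $z$ acts as the identity on all of $A$, so $[z]=1$ in $F(A)$ and hence $\sum_g e_g=[|G|E^\infty(e)]=\tfrac{|G|}{\Index_w E}[z]=1$ in $F(A)$, using $\Index_w E=|G|$. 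Since a family of $|G|$ projections summing to the unit of a unital C*-algebra is automatically mutually orthogonal, $\{e_g\}_{g\in G}$ is a Rokhlin system for $\af$, i.e.\ $\af$ has the Rokhlin property in the sense of Nawata.

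I expect the main difficulty to be the bookkeeping among the four quotients $F(A)$, $M(D(A,A^\infty))$, $F(P)$ and $M(D(P,P^\infty))$, and in particular the two-way traffic between $A$ and $P$: finiteness of $\Index_p E$ together with simplicity of $A$ (so that $A=\overline{APA}$) is what lets one promote ``acts as the identity on $P$'' to ``acts as the identity on $A$'', while \LL{lem:D} is what keeps the relevant elements inside $D(P,P^\infty)$ when going the other way. The second delicate point is property (iii) of \DD{dfn:Rokhlin}, which has no counterpart on the group-action side; the way around it is that the sequentially-split homomorphism $\bt$ of \LL{lem:basic} is built from (i) and (ii) alone, after which simplicity of $A$ makes $\bt$ injective for free. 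Pinning down $\Index_w E=|G|$, equivalently the outerness of $\af$, is the remaining ingredient.
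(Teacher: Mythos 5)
Your proof follows the same route as the paper's: in the forward direction you lift the Rokhlin projections and take $e$ to be a lift of $e_1$, and in the converse you set $e_g=\af_g([e])$. You actually supply more detail than the printed proof: the exact identity $|G|E^\infty(e)p=p$ via \LL{lem:D}, condition (iii) via the homomorphism $\bt$ of \LL{lem:basic} (whose proof indeed consumes only conditions (i) and (ii)) together with simplicity of $A$, and the standard fact that projections summing to the unit are automatically orthogonal; the paper simply declares (iii) automatic and asserts $\sum_g e_g=1$ without comment. All of these added steps check out.

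The step to push back on is your use of $\Index_w E=|G|$. In the forward direction the appeal to outerness of Rokhlin actions is legitimate (and is needed to pass from $[E^\infty(e)]=\tfrac1{|G|}$ to condition (ii), a point the paper leaves implicit), but in the converse direction it is circular: there you only know that $E$ satisfies \DD{dfn:Rokhlin}, and ``$\af$ is pointwise outer, being Rokhlin'' presupposes exactly the conclusion you are proving. Condition (ii) by itself only gives $\sum_g e_g=\frac{|G|}{\Index_w E}\cdot 1$, and the scalar $\Index_w E=\widehat{E^{**}}(1)$ cannot be forced to equal $|G|$ from the Rokhlin property of $E$ alone: for the trivial action the canonical expectation is the identity map, which has index one and admits a Rokhlin element (a quasicentral approximate unit), while the trivial action is not Rokhlin. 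So to close the converse you must derive $\Index_w E=|G|$ from some independent input (outerness, or an explicit hypothesis on the inclusion), or at least flag where this normalization enters. To be fair, the paper's own converse silently assumes the same identity when it asserts $\sum_g e_g=1$, so this is a gap you share with the published argument rather than one you introduce.
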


\begin{proof}
Let $\{e_g\}_{g_\in G}$ be the Rokhlin projections of $\alpha$. Then by \cite[Lemma~10.1.12]{TL}
these elements can be lifted to mutually orthogonal positive contractions $\{f_g\}_{g\in G} \in A^{'} \cap A^\infty$. 
Set $e = f_0$. Then $e \in A^{'} \cap A^\infty$ such that 
\begin{align*}
E^\infty(e) + A_{nn}(A, A^\infty) &= \frac{1}{|G|}\sum_{g\in G}\alpha_g(f_0)  + A_{nn}(A, A^\infty)\\
&= \frac{1}{|G|}\sum_{g\in G}e_g = \frac{1}{|G|}.
\end{align*}

Since $A^{'} \cap A^\infty/A_{nn}(A, A^\infty)$ is unital isomorphic into  
$N(D(A, A^\infty), A^\infty)/A_{nn}(D(A, A^\infty)$, $A^\infty)$ by \cite[Proposition~1.5(2)]{BS}, 
we have $[E^\infty(e)] = \frac{1}{|G|}$ in $N(D(A, A^\infty), A^\infty)/A_{nn}(D(A, A^\infty)$.
Since $A$ is simple, the third condition $(iii)$ in  Definition~\ref{dfn:Rokhlin} is automatically hold. Therefore, $E$ has the Rokhlin property in the sense of Definition~\ref{dfn:Rokhlin}.

Conversely, $E$ has Rokhlin  property in the sense of Definition~\ref{dfn:Rokhlin}. 
Then there is a contractive positive element $e \in A' \cap A^\infty$ such that 
$[e] \in N(\overline{AA^\infty A}, A^\infty)/A_{nn}(\overline{AA^\infty A}, A^\infty) 
=  N(\overline{AA^\infty A}, A^\infty)/A_{nn}(A, A^\infty) $ is a projection.
Set $e_g = \alpha_g([e])$ for each $g \in G$. Then $\{e_g\}_{g\in G}$ is a set of orthogonal projections 
in $A' \cap A^\infty/A_{nn}(A, A^\infty)$ such that 
$\sum_{g\in G}e_g = 1$.
\end{proof}


\begin{rmk}
Under the same assumption in Proposition \ref{prp:Nawata} we know that an each property in $(1) \sim  (12)$ is inherited to the fixed point algebra $A^\alpha$ by Theorem \ref{thm:main theorem}. Moreover, 
since $A^\alpha$ is stably isomorphic to the crossed product algebra $A \rtimes_\alpha G$, an each property $(1) \sim  (12)$ in $A$ is inherited to $A \rtimes_\alpha G$, too.
\end{rmk}



\end{document}